\documentclass[submission]{dmtcs-episciences}

\usepackage[utf8]{inputenc}
\usepackage{subfigure}
\usepackage{lineno,hyperref}
\modulolinenumbers[5]
\usepackage{amsmath}
\usepackage{graphicx}
\usepackage{colortbl}
\usepackage{longtable}
\usepackage{epsf}
\usepackage{color}

\usepackage{amsthm}

\newtheorem{theorem}{Theorem}
\newtheorem{lemma}{Lemma}

\newtheorem{definition}{Definition}

\usepackage{amssymb}
\newcommand{\UCS}{{Union Closed Set }}
\newcommand{\UCSs}{{Union Closed Sets }}
\newcommand{\UCSp}{{Union Closed Set}}
\newcommand{\UCSsp}{{Union Closed Sets}}
\newcommand{\red}{{\mbox{red}}}









\author{Gunnar~Brinkmann\thanks{Gunnar.Brinkmann@UGent.be} \and
Robin Deklerck\thanks{Robin.Deklerck@UGent.be}}
\affiliation{Universiteit Gent, Toegepaste wiskunde, informatica en statistiek, Krijgslaan 281 S9, \\B 9000 Gent, Belgium}

\title{Generation of Union Closed Sets and \\ Moore families}
\keywords{set, generation, orderly, homomorphism, Moore family}

\begin{document}

\maketitle
\begin{abstract}

In this article we will describe an algorithm to 
constructively enumerate 
non-isomorphic \UCSs and Moore sets. We
confirm the number of isomorphism classes of \UCSs and Moore sets
on $n\le 6$ elements presented by other authors and give the 
number of isomorphism classes of \UCSs and Moore sets
on $7$ elements. 
Due to the enormous growth of
the number of isomorphism classes it seems unlikely that
constructive enumeration for $8$ or more elements will be possible
in the foreseeable future. 

\end{abstract}

\section*{\underline{Introduction}}

All sets in this article are finite.  A \UCS is a set ${\cal U}$ of sets
with the property that for all $A,B\in {\cal U}$ we have that $A\cup
B\in {\cal U}$. We call $\Omega_{{\cal U}}=\bigcup_{A\in {\cal U}}A$ the universe
of ${\cal U}$. Two \UCSs with universe $\Omega_{{\cal U}}$, resp. $\Omega_{{\cal U}'}$
are defined to be isomorphic if there is a bijective mapping 
$\Omega_{{\cal U}} \to \Omega_{{\cal U}'}$ inducing a bijection between the
\UCSsp.

As we are only interested in isomorphism classes, we may assume
$\Omega_U= \Omega_n=\{1,\dots ,n\}$ for some $n$.  While the whole
universe
$\Omega_U$ is by definition an element of a \UCS ${\cal U}$,
this is not the case for the empty set.
As the empty set has no
impact on the structure of a \UCSp, one often either requires the
empty set to be an element of each \UCS or forbids it to be an
element. We choose for the first convention, so our \UCSs contain $\Omega_n$
as well as the empty set. We denote a set containing one representative
of each isomorphism class of \UCSs with universe  $\Omega_n$ as ${\cal R}_n$.

Although the famous \UCSs conjecture (or Frankl's conjecture) is exactly about the structures 
we generate here, our approach is not suitable for testing this conjecture.
A lot is known about the structure of possible counterexamples to 
the \UCSs conjecture (see \cite{UCSsurvey} for a survey), so any 
approach to extend the knowledge on 
the smallest size of a possible counterexample by constructive enumeration
must focus on the subset of \UCSs with those additional structural properties
(e.g. with small average size of the sets, without some subconfigurations 
like singletons, etc.).

\UCSs are closely related to {\em Moore Families}. A Moore family
for a universe $\Omega_n$ is a set  of subsets of $\Omega_n$
that is closed under intersection and contains $\Omega_n$. It is easy to
see that for a \UCS ${\cal U}$ the set ${\cal U}^c=\{\Omega_n\setminus A | A \in {\cal U}\}$
is a Moore family. For a Moore family ${\cal M}$ the set ${\cal M}^c=\{\Omega_n\setminus A | A \in {\cal M}\}$
is closed under union, but as the empty set is not necessarily contained in ${\cal M}$, it is a
\UCS for $\Omega_n\setminus \bigcap_{A\in \cal M} A$, which is isomorphic to a \UCS for some
$\Omega_{n'}$ with $n'\le n$.

A set ${\cal M}_n$ of representatives of Moore families (with the canonical definition
of isomorphism) for the universe $\Omega_n$ can be obtained from sets
$ {\cal R}_0, \dots , {\cal R}_n$ of representatives of \UCSs containing the empty set
as 

${\cal M}_n=\bigcup_{i=0}^n \{ {\cal U}^c  |  {\cal U}\in {\cal R}_i\}$

if the complement is in each case taken in the universe $\Omega_n$.

In \cite{Moorefamilies5}, \cite{Moorefamilies6} and \cite{Moorefamilies7} 
Moore families are enumerated. 
In the most advanced of these articles -- \cite{Moorefamilies7} --  all
Moore families for $n\le 6$ were counted and representatives of 
isomorphism classes were generated. For $n=7$ the approach is not suitable for
generating a set of representatives and only the number of labeled Moore families
-- that is without considering isomorphisms -- was determined 
by clever use of the structure
of representatives of Moore families for $n=6$. In our algorithm we determine the number of
labeled \UCSs resp. labeled Moore families for $n= 7$ from representatives and 
their automorphism groups of the \UCSs for $n= 7$, resp. $n\le 7$.
This gives a very good independent test
for the implemetation in \cite{Moorefamilies7} as well as for our implementation.
When computing the number of labeled Moore families for $\Omega_7$ from the
number of labeled \UCSs with $n\le 7$, for those \UCSs with $n< 7$, 
the possible ways to choose $n$ vertices
from $\{1,\dots ,7\}$ are also taken into account.

\section*{\underline{The algorithm}}

A subset $A \subseteq \Omega_n$ is represented as a number 
$b(A)$ given as the binary number $b_{n-1}\dots b_0$ -- possibly with leading zeros -- with $b_i=1$
if $(i+1)\in A$ and $b_i=0$ otherwise.

We use an ordering of the subsets of $\Omega_n$. For $A,B \subseteq \Omega_n$
we define $A > B$ if $|A|<|B|$ (so sets with more elements are considered smaller in this order)
or $|A|=|B|$ and $b(A)>b(B)$. Whenever we refer to {\em larger} or {\em smaller} sets, we refer
to this ordering. 

The construction algorithm generates \UCSs recursively based on
the following easy lemma:

\begin{lemma}

If ${\cal U} \not= \{\Omega_n\}$ is a \UCS and $A$ is the largest 
non-empty element
in ${\cal U}$, then ${\cal U}\setminus \{A\}$ is also a \UCSp.

\end{lemma}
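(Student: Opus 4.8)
The plan is to show that removing the largest non-empty element $A$ from $\mathcal{U}$ leaves a collection that is still closed under union. Since union-closure is the only defining property here (beyond containing the empty set, which is unaffected), the entire task reduces to verifying closure for $\mathcal{U}' = \mathcal{U} \setminus \{A\}$. First I would take two arbitrary sets $B, C \in \mathcal{U}'$ and consider their union $B \cup C$. Because $\mathcal{U}$ is union-closed and $B, C$ are in $\mathcal{U}$, we know $B \cup C \in \mathcal{U}$; the only thing that could go wrong is $B \cup C = A$, which would place the union outside $\mathcal{U}'$.

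So the heart of the argument is to rule out $B \cup C = A$ when $B, C \neq A$. Here I would use the hypothesis that $A$ is the \emph{largest} non-empty element of $\mathcal{U}$ in the stated ordering. The key observation is that $B \cup C \supseteq B$ and $B \cup C \supseteq C$, so $B \cup C$ has at least as many elements as each of $B$ and $C$. In the paper's ordering, having more elements makes a set \emph{smaller}, so $B \cup C \le B$ and $B \cup C \le C$. I would then argue that if $B \cup C = A$, then both $B \ge A$ and $C \ge A$; since $A$ is the largest element this forces $B = A$ or $C = A$ unless $B, C$ are strictly smaller. The cleanest route is to note that $B \cup C = A$ with $B \subseteq A$ and $C \subseteq A$ means $|B|, |C| \le |A|$; if additionally $B \neq A$ and $C \neq A$ then $B \subsetneq A$ and $C \subsetneq A$, so $|B| < |A|$ and $|C| < |A|$, yet $B \cup C = A$ requires every element of $A$ to lie in $B$ or $C$, which is possible, so I cannot conclude from cardinalities alone.

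The subtle point — and what I expect to be the main obstacle — is precisely that two proper subsets of $A$ can have union equal to $A$, so the naive cardinality argument does not immediately close the case. The correct resolution is to invoke maximality more carefully: since $A$ is the largest non-empty element of $\mathcal{U}$, and $B \cup C \in \mathcal{U}$ is non-empty (assuming $B, C$ are not both empty, in which case their union is empty and trivially in $\mathcal{U}'$), we have $B \cup C \le A$ in the ordering. But $B \cup C = A$ is perfectly consistent with this. The real content is that we only need $B \cup C \in \mathcal{U}'$, i.e. $B \cup C \neq A$; however, if $B \cup C = A$ with $B \neq A$, then $A$ was \emph{not} needed to witness this union, and removing $A$ still leaves the union $A$ outside $\mathcal{U}'$. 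I would therefore correct the statement's reading: the claim is that $\mathcal{U}'$ is union-closed \emph{as a family}, meaning closed under unions of its own members, and the case $B \cup C = A$ can indeed arise but is harmless only if we interpret the lemma as removing a single maximal element whose presence is not forced by any union of smaller elements.

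Reconsidering, the honest and direct argument is this: I would simply observe that for $B, C \in \mathcal{U}'$, the union $B \cup C$ lies in $\mathcal{U}$, and I must show it is not $A$. Since $A$ is maximal, $A$ cannot be a \emph{proper} subset of any element; thus if $B \cup C = A$ then one of $B, C$ must already equal $A$ (for otherwise $B, C \subsetneq A$, but two proper subsets can union to $A$ — so this still fails). The genuine fix, which I expect the authors use, is to note that $A$ being the \emph{largest} element means no other element of $\mathcal{U}$ is a superset of $A$, hence $A$ is union-irreducible within $\mathcal{U}$ only if we additionally know $A$ is not the union of two strictly smaller elements. I anticipate the author's actual proof sidesteps my confusion by a cleaner route, likely by verifying directly that the union of any two members of $\mathcal{U}'$ stays strictly below $A$ or equals a member already present, and I would structure my final write-up around isolating exactly why $B \cup C = A$ cannot occur for $B, C \neq A$ — flagging this as the one step requiring genuine care.
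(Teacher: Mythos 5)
Your proposal correctly reduces the lemma to showing that $B \cup C \neq A$ for $B, C \in \mathcal{U} \setminus \{A\}$, but it then stalls at exactly that step and even asserts --- wrongly --- that the cardinality argument ``still fails'' because two proper subsets of $A$ can have union equal to $A$. That observation is true of abstract subsets but irrelevant here: the point you missed is that no non-empty proper subset of $A$ can be an element of $\mathcal{U}$ in the first place. In the paper's ordering a set with \emph{fewer} elements is \emph{larger}, so ``$A$ is the largest non-empty element of $\mathcal{U}$'' means $A$ has minimal cardinality among the non-empty members (with ties broken by the binary value). Now suppose $B \cup C = A$ with $B, C \in \mathcal{U}$, $B, C \neq A$. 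Then $B, C \subseteq A$, hence $B, C \subsetneq A$. Since $A \neq \emptyset$, at least one of them, say $B$, is non-empty, and $|B| < |A|$ then gives $B > A$ in the ordering --- contradicting the maximality of $A$ among the non-empty members of $\mathcal{U}$. (If both $B$ and $C$ are empty, their union is empty, not $A$.) So for any $B, C \in \mathcal{U} \setminus \{A\}$ the union $B \cup C$, which lies in $\mathcal{U}$ by closure, is never $A$, and the lemma follows.

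The confusion in your write-up comes from conflating what maximality of $A$ would mean under the subset order (merely that $A$ is not a proper subset of any member) with what it means under the paper's order, where it forbids $\mathcal{U}$ from containing \emph{any} non-empty set strictly contained in $A$. You repeatedly circle the correct question --- why $B \cup C = A$ cannot occur --- and correctly flag it as the step requiring care, but your proposal never closes it, so as written the argument is incomplete. For comparison: the paper states this lemma without proof (calling it easy), and the intended argument is precisely the two-line one above.
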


This implies that \UCSs for universe  $\Omega_n$ can be recursively constructed
from the \UCS $\{\Omega_n,\emptyset\}$ of smallest size by successively adding subsets of $\Omega_n$
that are larger than the largest non-empty set already present.
Of course it is not assured that adding a smaller set to a \UCS does not violate
the condition that the set must be closed under unions.

In order to turn this into an efficient algorithm, two tests that are 
(in principle) applied to each structure generated must be very fast:

\begin{description}
\item[(i)] The test whether the set that has been constructed by adding a new set
is closed under union.

\item[(ii)] The test for isomorphisms.
\end{description}

We will first discuss (i). A straightforward way to test (i) for a \UCS ${\cal U}$ to which a new
set $A$ is added would be to form all unions $A\cup B$ with $B\in {\cal U}$ and test whether 
they are in ${\cal U}\cup \{A\}$. Although all these steps can be implemented as 
efficient bit-operations, their number would slow down the program. We define:

\begin{definition}

For a \UCS ${\cal U}$ we define the reduced set $\red({\cal U})$ as

$\red({\cal U})=\{ A\in {\cal U} | A\not= \emptyset \mbox{ and there is no }A_1\not= \emptyset \mbox{ in } {\cal U}, A_1\subsetneq A\}$

\end{definition}

\begin{lemma}

Let ${\cal U}$ be a \UCS for a universe $\Omega_n$ and let $A\subset \Omega_n$,
that is larger than any non-empty set in ${\cal U}$.
Then ${\cal U}\cup \{A\}$ is closed under union if and only if
$A\cup B\in {\cal U}$ for all $B\in \red({\cal U})$.

\end{lemma}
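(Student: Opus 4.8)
The plan is to prove the two implications separately, dispatching the easy forward direction quickly and isolating the combinatorial content in the converse. Before either, I would reduce the closure condition to a manageable form. Since ${\cal U}$ is already closed under union, the only unions that could fail to lie in ${\cal U}\cup\{A\}$ are those involving the new set, i.e. the unions $A\cup B$ with $B\in{\cal U}$ (and $A\cup A=A$ is harmless). Hence ${\cal U}\cup\{A\}$ is closed under union if and only if $A\cup B\in{\cal U}\cup\{A\}$ for every non-empty $B\in{\cal U}$. I would also record the one place the size hypothesis is used: if a non-empty $B\in{\cal U}$ satisfied $B\subseteq A$, then $|B|\le|A|$, while $A$ being larger than $B$ in our ordering forces $|A|\le|B|$; together these give $B=A$, contradicting that $A$ is strictly larger than every non-empty set of ${\cal U}$. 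So no non-empty $B\in{\cal U}$ is contained in $A$, and therefore $A\cup B\supsetneq A$, in particular $A\cup B\neq A$, for every non-empty $B\in{\cal U}$.

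For the forward implication, assume ${\cal U}\cup\{A\}$ is closed. Each $B\in\red({\cal U})$ is a non-empty element of ${\cal U}$, so $A\cup B\in{\cal U}\cup\{A\}$; by the observation above $A\cup B\neq A$, hence $A\cup B\in{\cal U}$, which is exactly the asserted condition.

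The converse is where $\red({\cal U})$ earns its keep. Assume $A\cup B\in{\cal U}$ for all $B\in\red({\cal U})$, and let $C$ be an arbitrary non-empty element of ${\cal U}$; by the first paragraph it suffices to show $A\cup C\in{\cal U}$. The key step is a finiteness argument: the non-empty members of ${\cal U}$ that are contained in $C$ form a non-empty finite poset under inclusion (it contains $C$), so it has a minimal element $B$. By minimality within this poset, $B$ has no non-empty proper subset lying in ${\cal U}$ (such a subset would again be contained in $C$), so $B\in\red({\cal U})$, and $B\subseteq C$. Now I would use the identity $A\cup C=(A\cup B)\cup C$, valid because $B\subseteq C$. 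Both $A\cup B$ (by hypothesis) and $C$ lie in ${\cal U}$, so their union lies in ${\cal U}$ by closure of the original \UCS. Thus $A\cup C\in{\cal U}$ for every non-empty $C$, and ${\cal U}\cup\{A\}$ is closed.

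The only point requiring genuine thought — and the conceptual crux — is recognizing that $A\cup C=(A\cup B)\cup C$ re-expresses the \emph{new} union $A\cup C$ as a union of two \emph{old} members of ${\cal U}$, which lets the closure of ${\cal U}$ itself finish the job. Everything else is bookkeeping: the reduction to unions with $A$, the size hypothesis guaranteeing $A\cup B\neq A$, and the finiteness step producing a minimal $B\in\red({\cal U})$ below each $C$. The payoff, and the reason the lemma matters for the algorithm, is that no union $A\cup B$ with $B\notin\red({\cal U})$ ever has to be tested.
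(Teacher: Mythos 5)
Your proof is correct and follows essentially the same route as the paper's: the forward direction uses the ordering hypothesis to rule out $A\cup B=A$, and the converse rests on the same key identity (your $A\cup C=(A\cup B)\cup C$ with $B\in\red({\cal U})$, $B\subseteq C$ is exactly the paper's $A'\cup D=A\cup D$ with $A'=A\cup D'$). If anything, you are slightly more complete than the paper, which asserts without justification that some $D'\subseteq D$ with $D'\in\red({\cal U})$ exists, whereas you supply the finite-poset minimality argument and also explicitly dispose of the empty set.
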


\begin{proof}

Assume first that ${\cal U}\cup \{A\}$ is closed under union
and let $B\in \red({\cal U})$.
Then $A\cup B\in ({\cal U}\cup \{A\})$ and 
as $A$ is larger than $B$, we have $A\cup B \not= A$, so $A\cup B\in {\cal U}$.

For the other direction assume that 
$A\cup B\in {\cal U}$ for all $B\in \red({\cal U})$ and let 
$D\in {\cal U}$.

Choose any $D'\subset D$ so that $D'\in \red({\cal U})$. Then $A'=A\cup D'\in {\cal U}$
and therefore also $A'\cup D \in {\cal U}$ as ${\cal U}$ is closed under union, but 
$A'\cup D = A\cup D$ -- so $A\cup D\in {\cal U}\cup \{A\}$ and 
${\cal U}\cup \{A\}$ is closed under union.

\end{proof}

It would be inefficient to compute $\red({\cal U})$ each time a new \UCS is constructed,
but as a new \UCS  ${\cal U}'$ is constructed by adding a new smallest element $A$ to
${\cal U}$, the set  $\red({\cal U}')$ can easily be constructed from $\red({\cal U})$
by adding $A$ and removing elements that contain $A$. Nevertheless the few lines
of code testing whether the potential \UCS is closed under union take more than $50\%$
of the total running time when computing \UCSs for $\Omega_6$, which is the largest case that can be profiled.

In order to solve problem (ii) efficiently -- that is avoid the generation of isomorphic copies,
we use a combination of Read/Farad{\v z}ev type orderly generation 
\cite{Fa76}\cite{Read78} and the homomorphism principle (see e.g. \cite{B98}).

Our first aim is to define a unique representative for each isomorphism class
-- called the canonical representative -- 
and then only construct \UCSs that are the the canonical representatives of their class. We
represent a \UCS ${\cal U}$ with $k+1$ elements $A_1<A_2<\dots <A_k<\emptyset$ as the string 
$s({\cal U})=b(A_1),\dots ,b(A_k)$ of numbers.
For a given isomorphism class of \UCSs for a universe $\Omega_n$
we choose the \UCS with the lexicographically smallest string as the representative.

It is {\em in principle} easy to test whether a given \UCS ${\cal U}$ is the representative 
of its class by applying all
$n!$ possible permutations to ${\cal U}$ and comparing the strings. 
As $n\le 7$ this would not be extremely expensive, but due to the large number of
times that this test has to be applied, still too expensive to construct the \UCSs for
$\Omega_7$. In the sequel we will describe a way how this can be optimized.

In order to increase the efficiency by making it an orderly algorithm of Read/Farad{\v z}ev type, we will
use the canonicity test not only for structures we output, but also during the construction:
non-canonical structures are neither output nor used in the construction. This will only lead
to a correct algorithm if we can prove that canonical representatives are constructed
from canonical representatives:

\begin{theorem}\label{thm:orderly}

Let ${\cal U} \not= \{\Omega_n,\emptyset\}$ be a \UCS for the universe $\Omega_n$ that is the 
canonical representative for its isomorphism class. If ${\cal U}=\{A_1,A_2,\dots ,A_k,\emptyset\}$
with $A_1<A_2<\dots <A_k$ and $1\le m \le k$, then $\{A_1,A_2,\dots ,A_m,\emptyset\}$
is also the canonical representative of its class.

\end{theorem}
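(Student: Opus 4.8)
The plan is to prove the statement by induction on $k-m$, reducing everything to a single step: removing the largest non-empty element. Concretely, it suffices to show that if ${\cal U}=\{A_1,\dots,A_k,\emptyset\}$ with $A_1<\dots<A_k$ is canonical and $k\ge 2$, then ${\cal U}'={\cal U}\setminus\{A_k\}=\{A_1,\dots,A_{k-1},\emptyset\}$ is canonical. That ${\cal U}'$ is again a \UCS is exactly the first lemma, and ${\cal U}'$ is again written in sorted order, so the step may be iterated; the induction bottoms out at $\{\Omega_n,\emptyset\}$, whose string $2^n-1$ is fixed by every permutation and is therefore trivially canonical.

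For the single step I would argue by contradiction. Suppose ${\cal U}'$ is not canonical and pick a permutation $\pi$ of $\Omega_n$ realizing the canonical form of ${\cal U}'$, so that $s(\pi({\cal U}'))<s({\cal U}')$ lexicographically. Since $A_k$ is the largest non-empty set of ${\cal U}$, it is appended at the end of the string, i.e. $s({\cal U})=s({\cal U}')\,b(A_k)$. The idea is to compare $s({\cal U})$ not directly with $s(\pi({\cal U}))$, but with the auxiliary string $\sigma=s(\pi({\cal U}'))\,b(\pi(A_k))$ obtained by \emph{appending} $b(\pi(A_k))$ to the canonical string of ${\cal U}'$. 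Because $s(\pi({\cal U}'))$ and $s({\cal U}')$ have the same length $k-1$ and first differ in some $j$-th symbol with $s(\pi({\cal U}'))[j]<s({\cal U}')[j]$, appending one further symbol to each keeps the first difference in position $j\le k-1$; hence $\sigma<s({\cal U})$.

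It then remains to show $s(\pi({\cal U}))\le\sigma$, which combined with $\sigma<s({\cal U})$ contradicts the canonicity of ${\cal U}$. The two strings $s(\pi({\cal U}))$ and $\sigma$ are arrangements of the very same multiset of $k$ numbers, the $b$-values of $\pi(A_1),\dots,\pi(A_k)$, and they differ only in the placement of $b(\pi(A_k))$. Here I would use the one structural fact that drives the whole argument: since $A_k$ is the largest element of ${\cal U}$ it has \emph{minimum} cardinality, so $\pi(A_k)$ has minimum cardinality in $\pi({\cal U})$ and therefore lands in the final cardinality block of the sorted string. Consequently $s(\pi({\cal U}))$ and $\sigma$ already agree on every position outside that final block, and inside the block $s(\pi({\cal U}))$ lists the relevant $b$-values in increasing order whereas $\sigma$ lists the same multiset of values in another order. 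As increasing order is the lexicographically smallest arrangement, this yields $s(\pi({\cal U}))\le\sigma$ and closes the argument.

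The step I expect to be the main obstacle is precisely this last comparison, because it forces one to reconcile two different orders: the string is sorted by the paper's order (cardinality first, $b$-value second), whereas the canonicity test compares strings by raw $b$-value. The reconciliation works only because within a single cardinality block the paper's order \emph{coincides} with the $b$-value order, so sorting that block is the same as taking its lexicographically smallest arrangement — and the minimality of $|A_k|$ is exactly what guarantees that the newly inserted symbol never leaves one such block. I would also dispose of the degenerate case in which $A_k$ is the unique set of minimum cardinality: then $\pi(A_k)$ is the unique largest element of $\pi({\cal U})$, the final block is a singleton, and $s(\pi({\cal U}))=\sigma$ outright, so the inequality is immediate.
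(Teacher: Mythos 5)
Your proof is correct, and its skeleton is the one the paper uses: reduce by induction to the single step $m=k-1$, take a permutation $\pi$ witnessing $s(\pi({\cal U}'))<s({\cal U}')$, and turn it into a string for the class of ${\cal U}$ that undercuts $s({\cal U})$. Where you diverge is in how the final lexicographic comparison is organized. The paper compares $s(\Pi({\cal U}))$ with $s({\cal U})$ directly, by a two-case analysis on the position $r$ at which $\Pi(A_k)$ is inserted relative to the first position $j$ where $s(\Pi({\cal U}'))$ and $s({\cal U}')$ differ: if $r>j$ the prefix up to $j$ is untouched, and if $r\le j$ it argues $p'_r<p_r\le s_r$. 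You instead interpose the auxiliary string $\sigma=s(\pi({\cal U}'))\,b(\pi(A_k))$ and split the claim into $\sigma<s({\cal U})$ (immediate, same first difference) and $s(\pi({\cal U}))\le\sigma$ (the sorted arrangement of a multiset is lexicographically minimal). Both arguments ultimately rest on the same structural fact -- $|A_k|$ is minimal among the non-empty sets, so $\pi(A_k)$ lands inside the last cardinality block, where the paper's set order coincides with the order on $b$-values -- but you state this explicitly, whereas the paper uses it silently in the step $p'_r<p_r$ (which requires that $\Pi(A_k)$ and the set it displaces have equal cardinality, so that being smaller in the set order really does mean having a smaller $b$-value). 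Your packaging is a bit longer but spells out the one delicate verification; the paper's case split is terser and leaves that point to the reader.
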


\begin{proof}
We prove the result for $m=k-1$. For $k=m$ it is the assumption and for 
$m<k-1$ it then follows by induction.

Let $s({\cal U})=(s_1,\dots ,s_{k})$. For a permutation $\Pi$ of $\Omega_n$ and a \UCS
${\cal U}$ we write $\Pi({\cal U})$ for the \UCS obtained by replacing each element
of a set in ${\cal U}$ by its image under $\Pi$. 
Assume that ${\cal U}'=\{A_1,A_2,\dots ,A_{k-1},\emptyset\}$ is not the canonical representative
of its class. So there is a permutation $\Pi$ of $\Omega_n$ with $s(\Pi({\cal U}'))<s({\cal U}')$.
Let $s(\Pi({\cal U}'))=(p_1, \dots ,p_{k-1})$ and we have $s({\cal U}')=(s_1,\dots ,s_{k-1})$.
Let $j$ be the first position so that $p_j<s_j$. 
Let us now look at $s(\Pi({\cal U}))=(p'_1,\dots ,p'_k)$ and let $r$ be the position
of $\Pi(A_k)$ in this string. If $r>j$ then $p'_i=p_i=s_i$ for $1\le i <j$ and
$p'_j=p_j<s_j$ -- so there is a smaller representative for the isomorphism class of
${\cal U}$. If  $r\le j$ then $p'_i=p_i=s_i$ for $1\le i <r$ and  
$p'_r<p_r\le s_r$ and again we have found a smaller representative contradicting the minimality of
$s({\cal U})$.

\end{proof}

This theorem proves that the algorithm can reject non-canonical \UCSs and is a correct
orderly algorithm, but the cost of the canonicity test would make it impossible to determine
the number of \UCSs for $\Omega_7$. 

For a given \UCS ${\cal U}$ with universe $\Omega_n$
and $1\le m \le n$ we write ${\cal U}_m$ for the subset containing all sets of
size $k\ge m$ and $\Pi_m({\cal U})$ for all permutations $\Pi$ of $\Omega_n$
with the property that $\Pi({\cal U}_m)={\cal U}_m$.


\begin{lemma}\label{lem:homomorph}

Let ${\cal U}\not=\{\Omega_n,\emptyset\}$ be a non-canonical \UCS 
for the universe $\Omega_n$ with sets $A_1<A_2<\dots <A_k<\emptyset$, so that
$\{A_1,A_2,\dots ,A_{k-1},\emptyset \}$ is canonical and $|A_k|=m$. Then all permutations
$\Pi$ with $s(\Pi({\cal U}))<s({\cal U})$ are in $\Pi_{m+1}({\cal U})$.

\end{lemma}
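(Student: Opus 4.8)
The plan is to exploit that our ordering sorts first by \emph{decreasing} cardinality, so the sets of cardinality $\ge m+1$ always form the initial segment of every string $s(\cdot)$. First I would record the structural consequences of the hypotheses. Since $A_k$ is the largest non-empty set of ${\cal U}$ in our ordering, it has the smallest cardinality among the non-empty sets, namely $m$; hence every non-empty set of ${\cal U}$ has cardinality $\ge m$, and the sets of cardinality $\ge m+1$ are precisely the members of ${\cal U}_{m+1}$. Writing $j=|{\cal U}_{m+1}|$, these $j$ sets occupy exactly positions $1,\dots ,j$ of $s({\cal U})$. Because $A_k$ has cardinality $m$, deleting it does not touch this initial segment, so $({\cal U}')_{m+1}={\cal U}_{m+1}$ for ${\cal U}'=\{A_1,\dots ,A_{k-1},\emptyset\}$, and the first $j$ entries of $s({\cal U}')$ and of $s({\cal U})$ agree.

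The key observation I would then make is that a permutation $\Pi$ preserves cardinalities, so the cardinality-$\ge m+1$ members of both $\Pi({\cal U})$ and $\Pi({\cal U}')$ are exactly $\Pi({\cal U}_{m+1})$, again $j$ in number. Consequently the first $j$ entries of $s(\Pi({\cal U}))$ and the first $j$ entries of $s(\Pi({\cal U}'))$ coincide; call this common initial segment $T'$, the sorted list of $b$-values of $\Pi({\cal U}_{m+1})$, and let $T$ be the sorted list of $b$-values of ${\cal U}_{m+1}$, i.e.\ the first $j$ entries of $s({\cal U})$ and $s({\cal U}')$.

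Fixing a permutation $\Pi$ with $s(\Pi({\cal U}))<s({\cal U})$, I would finish by a three-way comparison of $T'$ and $T$. If $T'<T$ lexicographically, then since $T'$ is also the initial segment of $s(\Pi({\cal U}'))$ while $T$ is the initial segment of $s({\cal U}')$, we would get $s(\Pi({\cal U}'))<s({\cal U}')$, contradicting the assumed canonicity of ${\cal U}'$. If $T'>T$, then the strings $s(\Pi({\cal U}))$ and $s({\cal U})$ already differ in their first $j$ positions in favour of ${\cal U}$, giving $s(\Pi({\cal U}))>s({\cal U})$ and contradicting the choice of $\Pi$. Hence $T'=T$; since each entry of these sorted lists is the $b$-value of a unique subset, equality of the lists forces $\Pi({\cal U}_{m+1})={\cal U}_{m+1}$, which is exactly the statement $\Pi\in\Pi_{m+1}({\cal U})$.

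The main obstacle is not any single estimate but the bookkeeping that guarantees the two initial segments are genuinely comparable: one must be sure that cardinality-preservation pins the cardinality-$\ge m+1$ sets to the \emph{same} block of positions $1,\dots ,j$ in all four strings involved, and that passing from ${\cal U}$ to ${\cal U}'$ removes only a cardinality-$m$ set and therefore leaves this block untouched. Once this alignment is in place, the canonicity of ${\cal U}'$ rules out $T'<T$ and the hypothesis on $\Pi$ rules out $T'>T$, leaving only the desired equality.
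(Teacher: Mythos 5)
Your proof is correct and follows essentially the same route as the paper's: both arguments rest on the observation that the sets of cardinality at least $m+1$ occupy the same initial block of positions in all the strings involved, that canonicity forbids this block from decreasing under $\Pi$, and that an increase of this block forces $s(\Pi({\cal U}))>s({\cal U})$. The only cosmetic difference is that you exclude the case $T'<T$ by appealing directly to the canonicity of ${\cal U}'=\{A_1,\dots ,A_{k-1},\emptyset\}$, whereas the paper first invokes Theorem~\ref{thm:orderly} to deduce that ${\cal U}_{m+1}$ itself is canonical and argues from there; both are valid.
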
 

\begin{proof}

Any permutation $\Pi$ not in $\Pi_{m+1}({\cal U})$ would by definition give
an isomorphic but different \UCS $\Pi({\cal U}_{m+1})$. As due to 
Theorem~\ref{thm:orderly}
$s({\cal U}_{m+1})$ is minimal, $s(\Pi({\cal U}_{m+1}))$ would be larger
and therefore also the part of the string of $s(\Pi({\cal U}))$ 
describing sets of size at least $m+1$ would imply $s(\Pi({\cal U}))>s({\cal U})$.

\end{proof}

Lemma~\ref{lem:homomorph} speeds up the canonicity test dramatically:
We start with a list of all $n!$ permutations as $\Pi_n({\cal U})$.
When testing canonicity of a \UCS with the smallest set of size $k<n$, we
only apply permutations from $\Pi_{k+1}({\cal U})$. During this application, we can 
already compute $\Pi_{k}({\cal U})$ by simply adding exactly those
permutations to the initially empty set  $\Pi_{k}({\cal U})$ that fix ${\cal U}$.
As we work with small sets, it is no problem to store and use the set of all
group elements instead of just a set of generators.

The impact is immediately clear: the number of permutations that has to be 
computed is much smaller and as soon as some $\Pi_{k}({\cal U})$ contains only the identity
-- which happens very fast -- no canonicity tests have to be performed, so that 
the total time for isomorphism checking is only about $7\%$
of the total running time when computing \UCSs for $\Omega_6$.

\subsection*{\underline{The implementation}}

The algorithm was implemented in $C$. Next to an efficient algorithm, of course also
implementation details are of crucial importance to be able to compute the results
for $\Omega_7$. We precomputed the action of all permutations on all sets, so that
they could be applied very fast and used data structures that allow to check
whether a set is contained in a \UCS in constant time. 
Special functions were written that add sets with
only one element. As no sets of smaller size will be added, no updates of the automorphism
groups are necessary and it turned out that at this stage it is also not efficient any more
to remove sets from $\red(\cal U)$ that are a superset of another element. Details
on the implementation can best be seen in the code which can be obtained from the 
authors.

\subsection*{\underline{Results}}

Tables 1 and 2 give the numbers of isomorphism classes of
\UCSs and Moore families as well as the numbers of labeled structures.
Up to $5$ elements the running times are less than $0.01$ seconds. 
For $n=6$
it is $8.2$ seconds on a Xeon(R) CPU E5-2690 0 running with $2.90$ GHz
and a high load (which can make a difference for these processors). For
$n=7$ the jobs were run in parallel on different machines and some 
parts had to be divided further in order to finish, so it is hard to give precise times.
Estimating the total running time from those parts that were run on the same machine used for $n=6$,
the total time on this type of machine should be around $10$ to $12$ CPU-years.

\bigskip

\begin{table}
\begin{tabular}{l|c|c}
$n$ & \UCS & labeled \UCS \\
\hline
 1 & 1  & 1  \\
 2 & 3  & 4  \\
 3 & 14  & 45  \\
 4 & 165  & 2.271  \\
 5 &  14.480 & 1.373.701  \\
 6 &  108.281.182 & 75.965.474.236  \\
 7 & 2.796.163.091.470.050 & 14.087.647.703.920.103.947 \\
\end{tabular}
\caption{\label{tab:ucs} The number of \UCSs and labeled \UCSsp.}
\end{table}

\bigskip

\begin{table}
\begin{tabular}{l|c|c}
$n$ & Moore families & labeled Moore families\\
\hline
 1 &  2 &  2  \\
 2 &  5 &  7  \\
 3 &  19 &  61  \\
 4 &  184 &  2480  \\
 5 &  14.664 &  1.385.552  \\
 6 & 108.295.846  &  75.973.751.474   \\
 7 & 2.796.163.199.765.896 & 14.087.648.235.707.352.472\\
\end{tabular}
\caption{\label{tab:moore} The number of Moore families and labeled Moore families.}
\end{table}

\bigskip

A \UCS on $n$ elements is called {\em sparse} if the average number of
elements in a set -- not counting the empty set -- is at most
$\frac{n}{2}$. For \UCSs that are not sparse, the \UCSs conjecture is
trivially true. The following table gives the number of sparse \UCSsp.
These numbers were computed once by filtering all \UCSs and once by
an independent implementation using special bounding criteria described in \cite{ucsthesis}.

\bigskip

\begin{table}
\begin{tabular}{l|c}
$n$ & sparse \UCS \\
\hline
 1 &  0 \\
 2 &  0 \\
 3 &  0 \\
 4 &  2 \\
 5 &  27 \\
 6 &  3.133 \\
 7 & 5.777.931 \\
\end{tabular}
\caption{\label{tab:ucssparse} The number of sparse \UCSsp.}
\end{table}

\bigskip

\subsection*{\underline{Testing}}

In \cite{ucsthesis} an independent implementation of the algorithm together
with special bounding criteria for sparse \UCSs was developed. The implementation
was used to generate all isomorphism classes 
of \UCSs for $\Omega_1, \dots ,\Omega_6$, and -- using special bounding
criteria -- to confirm the numbers of  isomorphism classes 
of sparse \UCSs  for $\Omega_7$.

A further and independent confirmation of the numbers for $\Omega_1, \dots ,\Omega_6$
and also an independent confirmation for $\Omega_7$ was obtained by computing the
number of labeled structures corresponding to each representative from the size
of the automorphism group. Note that as the size of the automorphism group is
known in the algorithm anyway, the additional costs for this test can be neglected. From this
we computed the number of (labeled) Moore families and got complete agreement
with \cite{Moorefamilies7} for $\Omega_1, \dots ,\Omega_7$. Due to the completely 
different approaches this makes implementation errors leading to the same incorrect results
in both cases extremely unlikely.

\subsection*{Acknowledgements}

We want to thank Craig Larson for introducing us to these interesting structures
and for interesting discussions.
Furthermore we want to thank Anastasiia Zharkova and Mikhail Abrosimov.
The first version of this algorithm was intended as a hands-on example to illustrate
isomorphism rejection techniques in a course they attended
at Ghent university. Without their visit, this algorithm might not have been developed.

\section*{References}
\bibliographystyle{abbrvnat}
\bibliography{/home/gbrinkma/schreib/literatur}

\end{document}